\theoremstyle{plain}
\newtheorem{Thm}{Theorem}
\newtheorem{Prop}[Thm]{Proposition}
\newtheorem{Lemma}[Thm]{Lemma}
\theoremstyle{definition}
\newtheorem*{Remark*}{Remark}
\newtheorem{Def*}{Definition}
\definecolor{checkersColor}{RGB}{255,60,60}
\definecolor{pathColor}{RGB}{0,0,240}
\definecolor{firstColor}{RGB}{0,0,240}
\definecolor{secondColor}{RGB}{0,180,0}
\title{Billiards, Checkers, and Quadratic Reciprocity}
\author{Johan Wästlund}
\date{} 
\begin{document}

\maketitle

\begin{abstract}
We indulge in what mathematicians call frivolous activities. In Arithmetic Billiards, a ball is bouncing around in a rectangle. In Parity Checkers we place checkers on a checkerboard under certain parity constraints. Both activities turn out to capture the division of congruence classes modulo a prime into squares and non-squares, allowing fairly simple proofs of the celebrated Law of Quadratic Reciprocity. 

Since the activities are analyzed somewhat in parallel we don't obtain two independent proofs. But Franz Lemmermeyer's online list of reciprocity proofs already contains well over three hundred items, which seems enough anyway.
\end{abstract}

\section{Two activities} \label{S:intro}
In Arithmetic Billiards \cite{pool, P}, a ball is moving in a rectangle of integer sides $m$ and $n$. It starts in the lower left corner and moves at 45 degree angles to the sides of the rectangle, bouncing in the obvious way when it hits the boundary. Eventually the ball reaches another corner and that completes the path.

\begin{center}
\begin{tikzpicture} [scale=0.6]

\draw [lightgray] (0,0) grid (7,5);
\draw [black] (0,0) rectangle (7,5);

 \draw[pathColor, thick] (0,0)--(5,5)--(7,3)--(4,0)--(0,4)--(1,5)--(6,0)--(7,1)--(3,5)--(0,2)--(2,0)--(7,5);    
 
 \foreach \x/\y in {0/0, 4/4, 0/4, 6/0, 2/0, 6/4}
  \draw[pathColor, thick] (\x +0.6,\y+0.4) -- (\x +0.6,\y+0.6) -- (\x +0.4,\y+0.6);  
  
  \foreach \x/\y in {6/2, 4/0, 2/4, 0/2}
  \draw[pathColor, thick] (\x +0.4,\y+0.6) -- (\x +0.4,\y+0.4) -- (\x +0.6,\y+0.4);
  
  \foreach \x/\y in {5/4, 6/3, 1/4, 5/0, 0/1, 1/0}
  \draw[pathColor, thick] (\x +0.4,\y+0.4) -- (\x +0.6,\y+0.4) -- (\x +0.6,\y+0.6);
  
  \foreach \x/\y in {3/0, 0/3, 6/1, 3/4}
  \draw[pathColor, thick] (\x +0.4,\y+0.4) -- (\x +0.4,\y+0.6) -- (\x +0.6,\y+0.6);
 
 \node at (-1.4, 2.5) {$m=5$};
 \node at (3.5,5.8) {$n=7$};
 \node at (2,-0.5) { $+$};
 \node at (4,-0.5) { $-$};
 \node at (6,-0.5) { $+$};

\end{tikzpicture}
\end{center}

We say that a bounce along the base of the rectangle is positive if the ball is moving from left to right as it bounces, and negative if it moves from right to left. 
In Section~\ref{S:billiards} (Theorem~\ref{T:legendre}) we show that if the width $n$ is a prime number and the height $m$ is not divisible by $n$, then the number of negative bounces is even if $m$ is congruent to a square modulo $n$, and odd otherwise.

\medskip

Parity Checkers is played on a rectangular checkerboard where the lower left corner is a dark square. For reasons that will become clear we want the checkerboard to have $m-1$ rows and $n-1$ columns. We put black pebbles on some designated set of light squares. An example of particular interest is the set of light squares in the bottom row. 
The challenge is then to put checkers, say red ones, on a set of dark squares in such a way that the light squares that have an odd number of checkers in their immediate neighborhood are precisely those that have pebbles on them. In the 4 by 6 case corresponding to $m=5$ and $n=7$, there is a unique solution:

\medskip

\begin{center}
\begin{tikzpicture} [scale=0.7]

\draw [gray] (0,0) grid (6,4);

\foreach \x/\y in {0/0, 1/1, 2/0, 4/0, 3/1, 5/1, 0/2, 2/2, 4/2, 1/3, 3/3, 5/3}
  \filldraw[lightgray] (\x,\y) rectangle (\x+1,\y+1);
  
\draw (0,0) rectangle (6,4);  

\foreach \x/\y in {0/2, 1/1, 1/3, 2/2, 4/0, 4/2, 5/3}
{
\filldraw [checkersColor] (\x+0.5, \y+0.5) circle (0.3cm);
\draw [black] (\x+0.5, \y+0.5) circle (0.3cm);
};

\foreach \x/\y in {1/0, 3/0, 5/0}
{
\filldraw [gray] (\x+0.52, \y+0.48) circle (0.15cm);
\filldraw [black!80!white] (\x+0.5, \y+0.5) circle (0.15cm);
}
          
\end{tikzpicture}
\end{center}

Counting modulo 2, the configuration of pebbles can be regarded as a linear function of the configuration of checkers. In Section~\ref{S:checkers} we show that this function is invertible on an $(m-1)$ by $(n-1)$ checkerboard precisely when $m$ and $n$ are relatively prime (Propositions~\ref{P:checkersInverse1} and \ref{P:checkersInverse2}). 

Moreover, if $n$ is a prime number that doesn't divide $m$, and we put pebbles on the light squares of the bottom row only, then Theorem~\ref{T:checkersLegendre} shows that the solution to the puzzle will have an even or odd number of checkers according as $m$ is a square or not modulo $n$.

\medskip

{\bf Acknowledgment.} In a seminar talk in Gothenburg in October 2023, Frederik Ravn Klausen discussed edge percolation on $\mathbb{Z}^2$ under parity constraints, for instance requiring every vertex to have even degree. It made me wonder when an edge configuration on a finite grid is determined by local parity constraints, which eventually led to the results presented here.

\section{Number theoretical prerequisites} \label{S:numtheory}

In this section we review some number theory before returning to our games and pastimes. 

If $p$ is an odd prime, the nonzero congruence classes modulo $p$ are divided into equally many quadratic residues and non-residues. The Legendre symbol $\left(a|p\right)$ is defined to be 1 if $a$ is not divisible by $p$ but congruent modulo $p$ to a square, $-1$ if $a$ is not congruent to a square, and 0 if $a$ is divisible by $p$. We can distinguish these cases by a criterion of Leonhard Euler:

\begin{Lemma} [Euler's criterion] \label{L:euler}
\[ a^{(p-1)/2} \equiv \left(\frac{a}{p}\right) \quad \text{\rm (mod $p$)}.\]
\end{Lemma}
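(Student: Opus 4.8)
The plan is to split along the three clauses in the definition of $\left(\frac{a}{p}\right)$, the substance being the separation of residues from non-residues. If $p \mid a$ then both sides vanish: since $p$ is odd, $(p-1)/2 \geq 1$, so $a^{(p-1)/2} \equiv 0 \equiv \left(\frac{a}{p}\right)$. So assume $p \nmid a$. By Fermat's little theorem $a^{p-1} \equiv 1$, whence $a^{(p-1)/2}$ squares to $1$; as $\mathbb{Z}/p\mathbb{Z}$ is a field whose only square roots of unity are $\pm 1$, we get $a^{(p-1)/2} \equiv \pm 1$. It then remains to match the sign to the residue/non-residue dichotomy.

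The residue half is immediate: if $a \equiv c^2$ with $p \nmid c$, then $a^{(p-1)/2} \equiv c^{p-1} \equiv 1$, again by Fermat, matching $\left(\frac{a}{p}\right) = 1$. I expect the converse to be the main obstacle, and I would handle it by counting. Because $p$ is odd, the squaring map $x \mapsto x^2$ on the $p-1$ nonzero classes is exactly two-to-one (the fibre over a nonzero square is $\{c, -c\}$ with $c \neq -c$, and a field forbids further coincidences), so there are precisely $(p-1)/2$ quadratic residues, and by the previous paragraph each is a root of $x^{(p-1)/2} - 1$.

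The finish uses that a nonzero polynomial over a field has at most as many roots as its degree: the polynomial $x^{(p-1)/2} - 1$, of degree $(p-1)/2$, therefore has the $(p-1)/2$ residues as its complete set of roots. Hence no non-residue can satisfy $a^{(p-1)/2} \equiv 1$, and since $-1$ is the only remaining possibility, every non-residue gives $a^{(p-1)/2} \equiv -1 = \left(\frac{a}{p}\right)$, completing all cases. An alternative route avoiding the polynomial-root count would fix a primitive root $g$ modulo $p$, write $a \equiv g^k$, and combine the fact that $a$ is a residue iff $k$ is even with $g^{(p-1)/2} \equiv -1$ to obtain $a^{(p-1)/2} \equiv (-1)^k$; I would select whichever of these prerequisites is already assumed available.
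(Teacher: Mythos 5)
Your proof is correct and follows essentially the same route as the paper's: Fermat's little theorem handles the residue case, and Lagrange's bound on the number of roots of $x^{(p-1)/2}-1$ over $\mathbb{Z}/p\mathbb{Z}$ forces non-residues to give $-1$. You are merely more explicit about the count of $(p-1)/2$ quadratic residues (via the two-to-one squaring map), which the paper states without proof before the lemma; the paper also sketches a second, pairing-based argument from Hardy and Wright that avoids Lagrange's theorem, which you do not need.
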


\begin{proof}
When $a\equiv b^2$ is a quadratic residue, Fermat's theorem shows that 
\[ a^{(p-1)/2} \equiv b^{p-1}\equiv 1.\]
On the other hand the polynomial equation $x^{(p-1)/2} \equiv 1$ cannot have more than $(p-1)/2$ roots modulo $p$, and therefore if $a$ is not a quadratic residue, $a^{(p-1)/2}\not\equiv 1$. Since nevertheless $a^{p-1}\equiv 1$, the only remaining possibility is that $a^{(p-1)/2}\equiv -1$.
\end{proof}

An argument avoiding Lagrange's theorem on roots of polynomials is given in the classic textbook of Hardy and Wright \cite{HW}: Consider the congruence $xy\equiv a$ (mod $p$). If $a$ is not a square, the set of solutions constitute a grouping of the nonzero residues modulo $p$ into $(p-1)/2$ pairs, each having the product $a$. Multiplying all the pairs we get
\[ (p-1)! \equiv a^{(p-1)/2} \quad \text{(mod $p$)}.\]
 If on the other hand $a$ is a nonzero square, the pairing will leave out two congruence classes $x$ and $-x$ such that $x^2=(-x)^2\equiv a$. In that case since $x \cdot (-x) \equiv -a$, we get
 \[ (p-1)! \equiv -a^{(p-1)/2} \quad \text{(mod $p$)}.\]
Taking $a=1$, which is obviously a square, we see that $(p-1)!\equiv -1$ (mod $p$), which concludes the proof. 
 
\medskip

The Law of Quadratic Reciprocity has received considerable attention since the first proof by Carl Friedrich Gauss, published in 1801 but dated 1796 in his diary. Franz Lemmermeyer lists more than 300 published proofs on his website \cite{L1}, see also his book \cite{L2}. 
\begin{Thm} [Law of Quadratic Reciprocity] \label{T:qr}
If $p$ and $q$ are distinct odd primes,
\[ \left(\frac{p}{q}\right)\left(\frac{q}{p}\right) = (-1)^{(p-1)(q-1)/4}\]
\end{Thm}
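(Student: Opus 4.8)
The plan is to give a lattice-point counting proof in the spirit of Eisenstein, but routed through the two combinatorial activities the paper has set up. Since the paper promises (in Theorem~\ref{T:legendre} and Theorem~\ref{T:checkersLegendre}) that the parity of a certain count equals the Legendre symbol $\left(\frac{m}{n}\right)$ when $n$ is prime and $n \nmid m$, the strategy is to specialize to the situation where \emph{both} dimensions of the billiard rectangle (equivalently, the $m$ and $n$ governing the checkerboard) are the two distinct odd primes $p$ and $q$. The symmetry of the rectangle under reflection across its main diagonal then relates the count that computes $\left(\frac{p}{q}\right)$ to the count that computes $\left(\frac{q}{p}\right)$, and the discrepancy between the two should be exactly a count of lattice points that evaluates to $(p-1)(q-1)/4 \pmod 2$.

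First I would set up the billiard path in a $p \times q$ rectangle (here $p, q$ playing the roles of $m, n$, both prime, neither dividing the other since they are distinct). By Theorem~\ref{T:legendre} the parity of the number of negative base-bounces is $\left(\frac{p}{q}\right)$ read as a sign. The key geometric fact is that the billiard trajectory in a rectangle is the image of a straight line of slope $1$ on the $p \times q$ torus (the standard unfolding of billiard dynamics), so bounces correspond to crossings of the horizontal and vertical gridlines, and the \emph{sign} of a base-bounce is recorded by which diagonal of the unfolded lattice square the segment lies on. Counting these crossings reduces to counting lattice points $(x,y)$ with $1 \le x \le (p-1)/2$, $1 \le y \le (q-1)/2$ lying on one side of a line of slope $q/p$, which is precisely Eisenstein's lattice-point set.

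The central step is then to observe that interchanging the roles of $p$ and $q$ reflects the rectangle across its main diagonal; the negative bounces along the base in one orientation correspond to a complementary set of crossings in the other. Concretely, I expect to show
\[
\left(\frac{p}{q}\right) = (-1)^{\mu}, \qquad \left(\frac{q}{p}\right) = (-1)^{\nu},
\]
where $\mu$ and $\nu$ count lattice points in two complementary triangular regions of the $(p-1)/2 \times (q-1)/2$ rectangle. Since no lattice point of that rectangle lies on the line $py = qx$ (as $\gcd(p,q)=1$ and the coordinates are too small), every interior lattice point is counted exactly once, giving $\mu + \nu = \frac{p-1}{2}\cdot\frac{q-1}{2}$. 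Multiplying,
\[
\left(\frac{p}{q}\right)\left(\frac{q}{p}\right) = (-1)^{\mu+\nu} = (-1)^{(p-1)(q-1)/4},
\]
which is exactly the claimed identity.

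The main obstacle will be the bookkeeping in the second step: correctly identifying the sign of each base-bounce with membership in the right lattice triangle, and verifying that the diagonal reflection really does swap the two triangles without double-counting or omitting boundary points. Getting the signs to match the Legendre symbol rather than its negative, and confirming that no lattice point falls on the dividing line, is where the argument must be done carefully; once that correspondence is pinned down, the final multiplication is immediate. An alternative, should the geometric sign-tracking prove delicate, is to run the identical partition argument inside Parity Checkers via Theorem~\ref{T:checkersLegendre}, using the transpose symmetry of the $(p-1) \times (q-1)$ board in place of the diagonal reflection of the rectangle; the two routes are parallel, which is consistent with the abstract's remark that the activities do not yield genuinely independent proofs.
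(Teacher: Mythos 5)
Your route is genuinely different from the paper's, and it contains one real gap. The paper never performs Eisenstein-style lattice-point counting: it extends $(m|n)$ to all coprime pairs via bounce signs, proves the reduction identity $\left(\frac{m}{n}\right)\left(\frac{n}{m}\right)=\left(\frac{m}{n-m}\right)$ (Proposition~\ref{P:almostReciprocity}) by cancelling the $y$-axis bounces against part of the $x$-axis bounces using the $180^\circ$ rotational symmetry of the path and the ``corridor'' observation, and then evaluates $\left(\frac{m}{n-m}\right)$ directly by a reflection symmetry (Lemma~\ref{L:mod4}). You instead keep both dimensions prime and try to convert the negative-bounce count into Eisenstein's count of lattice points under the line of slope $q/p$, then invoke the complementary-triangle identity $\mu+\nu=\frac{p-1}{2}\cdot\frac{q-1}{2}$. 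That is the classical Eisenstein proof grafted onto the billiard realization of Gauss's lemma, and it can be completed; what it buys is independence from the paper's extension of the symbol to non-prime and even arguments, at the cost of an extra analytic lemma.

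The gap is precisely the step you flag as the main obstacle, and it is not mere bookkeeping. In the $m$ by $n$ rectangle the base bounce at time $2km$ is negative exactly when $km \bmod n > n/2$, so $s$ is the Gauss-lemma count $\#\{k \le (n-1)/2 : km \bmod n > n/2\}$ --- a statement about the \emph{fractional} part of $km/n$ lying in $(\tfrac12,1)$. The number of lattice points strictly below the line in the column $x=k$ is the \emph{integer} part $\lfloor km/n\rfloor$. There is no pointwise geometric correspondence between negative bounces and Eisenstein lattice points (``which diagonal of the unfolded square the segment lies on'' does not deliver one); only the parities agree, i.e.\ $s\equiv\sum_{k}\lfloor km/n\rfloor \pmod 2$, and this holds only because $m$ is odd. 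Proving that congruence is the actual content of Eisenstein's argument (write $km=n\lfloor km/n\rfloor+r_k$, sum over $k\le(n-1)/2$, and use that the residues reflected into the lower half re-constitute $\{1,\dots,\tfrac{n-1}{2}\}$). Once that lemma is supplied, the remainder of your plan --- no lattice point of the half-rectangle lies on the line since $\gcd(p,q)=1$, the two triangles partition it, and the product of the two signs is $(-1)^{\mu+\nu}$ --- is correct and standard.
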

In other words, $\left(p|q\right) = \left(q|p\right)$ if at least one of $p$ and $q$ is congruent to 1 modulo 4, and $\left(p|q\right) = -\left(q|p\right)$ if both are congruent to 3 modulo 4.

In Sections~\ref{S:reciprocity} and \ref{S:checkers} we give proofs of Theorem~\ref{T:qr} based on arithmetic billiards and parity checkers respectively.

\section{Arithmetic billiards and the Legendre symbol} \label{S:billiards}

Returning to arithmetic billiards, let's put the $m$ by $n$ rectangle in a coordinate system with the lower left corner at the origin as in Figure~\ref{F:billiards1}, and suppose the ball is moving at a speed of one diagonal of a unit square per unit of time.

\begin{figure}[ht]
\begin{center}
\begin{tikzpicture} [scale=0.6]

\draw [lightgray] (0,0) grid (7,5);
\draw [black] (0,0) rectangle (7,5);

\draw[black, ->=stealth] (0,0)--(8,0);
\node at (8.5,0) {$x$};

\draw[black, ->=stealth] (0,0)--(0,5.8);
\node at (0,6.3) {$y$};

\node at (-1.4, 2.5) {$m=5$};
 \node at (3.5,5.8) {$n=7$};

 \draw[pathColor, thick] (0,0)--(5,5)--(7,3)--(4,0)--(0,4)--(1,5)--(6,0)--(7,1)--(3,5)--(0,2)--(2,0)--(7,5);    
 
 \foreach \x/\y in {0/0, 4/4, 0/4, 6/0, 2/0, 6/4}
  \draw[pathColor, thick] (\x +0.6,\y+0.4) -- (\x +0.6,\y+0.6) -- (\x +0.4,\y+0.6);  
  
  \foreach \x/\y in {6/2, 4/0, 2/4, 0/2}
  \draw[pathColor, thick] (\x +0.4,\y+0.6) -- (\x +0.4,\y+0.4) -- (\x +0.6,\y+0.4);
  
  \foreach \x/\y in {5/4, 6/3, 1/4, 5/0, 0/1, 1/0}
  \draw[pathColor, thick] (\x +0.4,\y+0.4) -- (\x +0.6,\y+0.4) -- (\x +0.6,\y+0.6);
  
  \foreach \x/\y in {3/0, 0/3, 6/1, 3/4}
  \draw[pathColor, thick] (\x +0.4,\y+0.4) -- (\x +0.4,\y+0.6) -- (\x +0.6,\y+0.6);
 
 \node at (2,-0.5) {$+2$};
 \node at (4,-0.5) {$-4$};
 \node at (6,-0.5) {$+6$};
 
\end{tikzpicture}
\end{center}
\caption{Arithmetical billiards on a 5 by 7 rectangle. The bounces along the base occur at times 10, 20, and 30, which are congruent modulo 7 to $-4$, $+6$, and $+2$ respectively.}
\label{F:billiards1}
\end{figure}
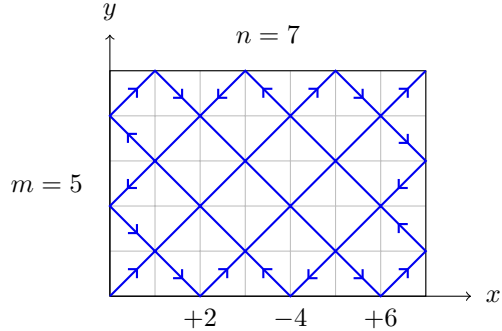

Looking at the $x$-and $y$-coordinates one at a time, the ball is just moving back and forth. This has the following consequences:

\begin{itemize}

\item The top-bottom bounces occur at times divisible by $m$, and the left-right bounces occur at times divisible by $n$. The ball reaches a corner at time equal to the least common multiple of $m$ and $n$.

\item The bounces along the $x$-axis occur at times $2m, 4m, 6m$ and so on. If $\gcd(m,n)=1$ so that $\operatorname{lcm}(m,n) = mn$, the ball will reach all points along the base that are at even distance from the origin.

\item 
Letting $x(t)$ be the $x$-coordinate of the ball at time $t$, we have
\[
\begin{cases}
\text{$x(t)\equiv t$ (mod $n$) when the ball is moving from left to right,}\\
\text{$x(t)\equiv -t$ (mod $n$) when the ball is moving from right to left.}
\end{cases}
\]
\end{itemize}
This last observation lets us conclude the following (again see Figure~\ref{F:billiards1}):

\begin{Lemma} \label{L:timeAndSpace}
If we label each bounce along the $x$-axis with its $x$-coordinate and its sign, then the label of a bounce will be congruent modulo $n$ to the time at which it occurs.
\end{Lemma}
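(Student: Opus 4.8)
The plan is to read this off directly from the last of the displayed observations — the dichotomy $x(t)\equiv t \pmod{n}$ versus $x(t)\equiv -t \pmod{n}$ — by treating the two directions of travel as separate cases. First I would pin down the meaning of the label: a bounce along the $x$-axis is positive when the ball moves left to right and negative when it moves right to left, and its label is the signed $x$-coordinate, namely $+x(t)$ in the positive case and $-x(t)$ in the negative case. Since a bounce off the base reflects only the vertical component of the velocity, the horizontal direction of travel — and hence the sign — is unambiguous throughout each such bounce, so the label is well defined.

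Now suppose a bounce occurs at time $t$ at horizontal position $x(t)$. If the bounce is positive, the ball is moving from left to right, so the observation gives $x(t)\equiv t \pmod{n}$; since the label equals $+x(t)$, the label is congruent to $t$ modulo $n$. If instead the bounce is negative, the ball is moving from right to left, so $x(t)\equiv -t \pmod{n}$, whence the label $-x(t)\equiv t \pmod{n}$. In both cases the signed label agrees with $t$ modulo $n$, which is exactly the assertion of the lemma.

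The only point that needs a word of care is the two bottom corners, where the ball simultaneously meets a vertical and a horizontal wall and the sign is not obviously defined; but the starting corner sits at $t=0$ and $x=0$, both $\equiv 0 \pmod{n}$, and the terminal corner is handled the same way, so these boundary cases do not affect the statement. I do not anticipate any genuine obstacle: practically all of the content already lives in the triangle-wave behavior of $x(t)$ recorded in the preceding observation, and this lemma merely repackages that fact so that the single congruence ``label $\equiv t$'' silently absorbs the sign bookkeeping. The value of the statement is in this uniformity rather than in any difficulty of proof — it is precisely what will later let us convert assertions about bounce positions and their signs into assertions about residues modulo $n$.
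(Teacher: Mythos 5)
Your argument is correct and is exactly the one the paper intends: the lemma is stated there as an immediate consequence of the displayed observation that $x(t)\equiv t$ or $x(t)\equiv -t$ modulo $n$ according to the horizontal direction of travel, and your two-case check (label $+x(t)\equiv t$ for positive bounces, label $-x(t)\equiv t$ for negative ones) is that same derivation made explicit. The remarks about the sign being well defined at a base bounce and about the corners are harmless extras; the paper simply notes that corners do not count as bounces.
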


As promised in Section~\ref{S:intro}, we can now relate the signs of the bounces to the Legendre symbol. The following theorem can be thought of as a geometric version of the so-called Gauss's lemma. 

\begin{Thm} \label{T:legendre}
Suppose we play arithmetic billiards on an $m$ by $n$ rectangle where $n$ is a prime and $m$ is not divisible by $n$. Then the product of the signs of the bounces along the base will be equal to the Legendre symbol $(m|n)$. 
\end{Thm}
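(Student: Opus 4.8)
The plan is to reduce the product of signs to a single congruence that Euler's criterion (Lemma~\ref{L:euler}) can evaluate, by multiplying together the labels of all the base bounces. Since $n$ is prime and $n\nmid m$ we have $\gcd(m,n)=1$, so $\operatorname{lcm}(m,n)=mn$ and the ball returns to the base exactly at the times $2m,4m,6m,\dots$. Calling the bounce at time $2km$ the $k$-th bounce, these occur precisely for $k=1,\dots,(n-1)/2$: indeed $2km<mn$ iff $k<n/2$, and at the terminal time $t=mn$ the ball sits in a top corner (there $t/m=n$ is odd) rather than on the base. So there are exactly $(n-1)/2$ bounces, which is the count we shall need.

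Next I would feed each bounce into Lemma~\ref{L:timeAndSpace}. Writing $\ell_k$ for the label of the $k$-th bounce, i.e.\ its signed $x$-coordinate, we get $\ell_k\equiv 2km\pmod n$, and the sign of $\ell_k$ is exactly the sign $\varepsilon_k\in\{+1,-1\}$ of that bounce. The key structural point is that the \emph{unsigned} coordinates $|\ell_1|,\dots,|\ell_{(n-1)/2}|$ are precisely the even numbers $2,4,\dots,n-1$, each occurring once. Two ingredients give this. First, the ball moves diagonally from the origin, so $x(t)\equiv t\pmod 2$; hence at the even time $2km$ the coordinate $|\ell_k|$ is even, and $0<|\ell_k|<n$ because $n\nmid 2km$ for $1\le k\le(n-1)/2$, so none of these bounces is a corner. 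Second, since $\gcd(2m,n)=1$ the map $k\mapsto 2km\bmod n$ is injective on $\{1,\dots,(n-1)/2\}$, and also $2km\not\equiv-2k'm$ for distinct $k,k'$ in this range (as $2\le k+k'\le n-1$); thus the $|\ell_k|$ are pairwise distinct. Being $(n-1)/2$ distinct even numbers in $\{2,4,\dots,n-1\}$, a set of exactly that size, they exhaust it.

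With this bijection the computation is short. On one hand,
\[
\prod_{k=1}^{(n-1)/2}\ell_k=\Big(\prod_k\varepsilon_k\Big)\prod_k|\ell_k|=\Big(\prod_k\varepsilon_k\Big)\,2^{(n-1)/2}\Big(\tfrac{n-1}{2}\Big)!,
\]
since the $|\ell_k|$ are $2,4,\dots,n-1$. On the other hand, reducing each label modulo $n$ via $\ell_k\equiv 2km$,
\[
\prod_{k=1}^{(n-1)/2}\ell_k\equiv\prod_{k=1}^{(n-1)/2}2km=2^{(n-1)/2}m^{(n-1)/2}\Big(\tfrac{n-1}{2}\Big)!\pmod n.
\]
The factor $2^{(n-1)/2}\left(\tfrac{n-1}{2}\right)!$ is a product of integers coprime to the prime $n$, hence invertible modulo $n$; cancelling it leaves $\prod_k\varepsilon_k\equiv m^{(n-1)/2}\pmod n$. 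By Euler's criterion $m^{(n-1)/2}\equiv(m|n)$, and since both $\prod_k\varepsilon_k$ and $(m|n)$ lie in $\{+1,-1\}$ while $n>2$, the congruence forces the equality $\prod_k\varepsilon_k=(m|n)$, as claimed.

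I expect the main obstacle to be the structural claim in the second paragraph — that the unsigned bounce coordinates are exactly $2,4,\dots,n-1$ — which plays the role of the permutation lemma in the usual proof of Gauss's lemma. Everything else is bookkeeping with Lemma~\ref{L:timeAndSpace} together with one invocation of Euler's criterion; it is precisely in the parity observation $x(t)\equiv t\pmod 2$ and the non-collision estimates that the primality of $n$ and the hypothesis $n\nmid m$ are genuinely used.
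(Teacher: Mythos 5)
Your proof is correct and follows essentially the same route as the paper: multiply the congruences from Lemma~\ref{L:timeAndSpace} over all base bounces, cancel the factor $2\cdot 4\cdots(n-1)$ (invertible modulo the prime $n$), and invoke Euler's criterion. The extra detail you supply --- that there are exactly $(n-1)/2$ bounces whose unsigned abscissae exhaust $\{2,4,\dots,n-1\}$ --- is exactly what the paper takes as already established from its earlier observation that when $\gcd(m,n)=1$ the ball reaches every point of the base at even distance from the origin.
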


In other words the number of negative bounces will be even if $m$ is a quadratic residue modulo $n$, and odd if $m$ is a non-residue.
 
\begin{proof}
By Lemma~\ref{L:timeAndSpace}, for each bounce along the $x$-axis, the time at which it occurs is congruent modulo $n$ to the $x$-coordinate times the sign. We multiply these congruences for all bounces along the $x$-axis, and get
\[ (2m)(4m)(6m)\cdots ((n-1)m) \equiv (-1)^s \cdot 2\cdot 4\cdot 6\cdots (n-1)\quad \text{(mod $n$)},\]
where $s$ is the number of negative bounces. 
When $n$ is a prime number we can cancel the factors $2\cdot 4 \cdot 6\cdots (n-1)$ from both sides and get 
\[m^{(n-1)/2} \equiv (-1)^s \quad \text{(mod $n$)}.\] In view of Euler's criterion (Lemma~\ref{L:euler}) this completes the proof.
\end{proof}

At this point we can establish the so-called supplements to the law of reciprocity: 
\begin{Prop} \label{P:supplement} 
When $n$ is an odd prime,
\begin{equation} \notag
\left(\frac{-1}{n}\right) = 
\begin{cases}
1, & \text{\rm if $n\equiv 1$ (mod $4$),}\\
-1, & \text {\rm if $n\equiv 3$ (mod $4$),} 
\end{cases}
\end{equation}
and
\begin{equation} \notag
\left(\frac{2}{n}\right) = 
\begin{cases}
1, & \text{\rm if $n\equiv \pm 1$ (mod $8$),}\\
-1, & \text {\rm if $n\equiv \pm 3$ (mod $8$).} 
\end{cases}
\end{equation}
\end{Prop}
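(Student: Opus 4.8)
The plan is to derive both supplements directly from Theorem~\ref{T:legendre} by exploiting the geometry of billiard paths in narrow rectangles, thereby avoiding any separate invocation of Gauss's lemma. The key observation is that Theorem~\ref{T:legendre} computes $(m|n)$ as the product of the signs of base-bounces, and for small fixed values of $m$ the billiard path is simple enough to count the negative bounces by hand. For the first supplement I would not use $m=-1$ directly (since $m$ must be a positive side length); instead I would use $(-1|n) = (n-1|n)$, or more cleanly observe that $(-1|n)$ follows from Euler's criterion as $(-1)^{(n-1)/2}$, which is exactly what Theorem~\ref{T:legendre}'s proof established. So the first supplement is essentially immediate: $(-1|n) \equiv (-1)^{(n-1)/2} \pmod n$, which is $1$ when $(n-1)/2$ is even, i.e.\ $n\equiv 1 \pmod 4$, and $-1$ when $n\equiv 3\pmod 4$.

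For the second supplement, the idea is to take $m=2$ and play billiards on a $2$ by $n$ rectangle, then count negative base-bounces geometrically. With height $m=2$, the ball traces a very predictable zigzag: the bounces along the base occur at times $4, 8, 12, \dots, 2(n-1)$, and by Lemma~\ref{L:timeAndSpace} their signed labels are these times reduced modulo $n$. First I would determine the sign of each bounce as a function of where in the back-and-forth motion the $x$-coordinate lands. Because the height is only $2$, the ball's horizontal direction reverses each time $t$ crosses a multiple of $n$, so the sign of the bounce at time $2k$ (for $k=1,\dots,n-1$) is positive or negative according to the parity of $\lfloor 2k/n\rfloor$. The count $s$ of negative bounces is then the number of $k\in\{1,\dots,n-1\}$ with $\lfloor 2k/n\rfloor$ odd, and Theorem~\ref{T:legendre} gives $(2|n)=(-1)^s$.

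The main obstacle will be the bookkeeping in the second supplement: I expect to split into the four residue classes $n\equiv 1,3,5,7\pmod 8$ and verify the parity of $s$ in each. The cleanest route is probably to note that $\lfloor 2k/n\rfloor$ is odd exactly when $n/2 < k < n$, i.e.\ for the roughly $n/2$ largest values of $k$, so $s = (n-1) - \lceil n/2 \rceil + \text{boundary corrections}$, and then to reduce this count modulo $2$. Tracking the floor/ceiling corrections carefully across the four classes modulo $8$ is the delicate part, but it is pure arithmetic once the geometric picture of the $2$ by $n$ zigzag is fixed. An appealing alternative that sidesteps the case analysis is to observe that the negative bounces correspond precisely to those even residues $2k$ with $n < 2k < 2n$, equivalently to the integers in $(n/2, n)$; the number of even integers in $(1, n)$ exceeding $n/2$ is a standard count whose parity is governed by $n \bmod 8$, and this recovers the stated pattern. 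I would present whichever of these two framings yields the shortest verification.
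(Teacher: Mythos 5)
Your overall strategy is the paper's: specialize Theorem~\ref{T:legendre} to thin rectangles and count the negative bounces by hand. For the first supplement you mostly bypass the billiards, reading $(-1|n)=(-1)^{(n-1)/2}$ straight off Euler's criterion; the paper instead plays on an $(n-1)$ by $n$ board and observes that every base bounce is negative, so that the count is $(n-1)/2$. Both are correct and the difference is cosmetic.

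For the second supplement, however, your primary framing contains a genuine error. You correctly note that the base bounces of the $2$ by $n$ game occur at times $4,8,\dots,2(n-1)$, i.e.\ at the multiples of $2m=4$, but you then count negative bounces over \emph{all} $k\in\{1,\dots,n-1\}$ with $\lfloor 2k/n\rfloor$ odd. The times $2k$ with $k$ odd are bounces against the \emph{top} of the rectangle, not the base, so this set is twice too large: it consists of all $k$ with $n/2<k<n$, has exactly $(n-1)/2$ elements, and would yield $(2|n)=(-1)^{(n-1)/2}=(-1|n)$, which is false (e.g.\ for $n=7$, where $(2|7)=1$ but $(-1)^3=-1$). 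The correct count restricts to times divisible by $4$: $s$ is the number of multiples of $4$ in the interval $(n,2n)$, namely $(n-1)/2-\lfloor n/4\rfloor$, and this is exactly what your ``appealing alternative'' computes (the even integers in $(n/2,n)$). That alternative is sound and is equivalent to the paper's formulation, which locates the negative base bounces at the points $(2k,0)$ with $k$ odd and so gets $s=\lceil (n-1)/4\rceil$; either way the parity depends only on $n\bmod 8$ as claimed. So keep the alternative and discard the first framing.
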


\begin{proof}
We have defined arithmetical billiards only for positive height and width of the rectangle, but $(-1|n)=(n-1\mathbin{|}n)$. If the height is $n-1$ and the width $n$, all bounces along the base will be negative, and the number of such bounces will be even when $n\equiv 1$ (mod 4) and odd when $n\equiv 3$ (mod 4).
 
If on the other hand the height is 2, the bounces taken from left to right will alternate between positive and negative, with negative bounces at $(2k,0)$ whenever $k$ is odd. This means that there will be an even number of negative bounces precisely when $n$ is congruent to $1$ or $-1$ modulo 8.
\end{proof}

\section{Reciprocity} \label{S:reciprocity}
The Law of Quadratic Reciprocity (Theorem~\ref{T:qr}) relates $(m|n)$ and $(n|m)$ when $m$ and $n$ are distinct odd primes. But since we now have an interpretation of the Legendre symbol in terms of signs of bounces, we can prove reciprocity without assuming that $m$ and $n$ are primes.

We extend the definition of the Legendre symbol to arbitrary positive $m$ and $n$ as follows: If $m$ and $n$ are relatively prime, we let $(m|n)$ be the product of the signs of the bounces along the $x$-axis in $m$ by $n$ arithmetic billiards. If $m$ and $n$ have a common factor, we set $(m|n)=0$. We have four remarks on this definition: 

\begin{enumerate}
\item 
If $m$ and $n$ have a common factor, there will be points along the $x$-axis at even distance from the origin that are never reached. Naturally these non-bounces can be interpreted as bounces of sign zero.

\item Strictly speaking we should have denoted our symbol by something like $(m|n)_B$ with an index $B$ for bounces or billiards, since the so-called Jacobi symbol is already an established generalization of the Legendre symbol. But we will keep our notation simple. Our definition actually agrees with the Jacobi symbol for odd ``denominators'', but we don't use this fact.

\item We allow $m$ and $n$ to be even as well as odd. When both are odd, the path will end at the point $(m,n)$, but otherwise it may end either at $(n,0)$ or at $(0,m)$. The endpoint of the path doesn't count as a bounce, but if it did it would be positive, so it wouldn't matter for our definition of $(m|n)$.

\item When the ``denominator'' $n$ is even, there is already a clash between definitions of $(m|n)$ in the literature. Our symbol $(m|n)$ agrees with the Zolotarev definition based on the sign of the permutation $x\mapsto mx$ of the congruence classes modulo $n$ (see \cite{BC}, the postscript of \cite{C}, and the supplement \cite{Montgomery} to \cite{nzm})
But this is not the same as the so-called Kronecker symbol. For example we insist that $(5|8)=1$ even though according to the Kronecker definition, $(5|8)$ would be $-1$. This is crucial in Proposition~\ref{P:almostReciprocity} and Lemma~\ref{L:mod4}, which do not hold for the Kronecker symbol.
\end{enumerate}

Since we are going to compare the two symbols $(m|n)$ and $(n|m)$, naturally we want to consider arithmetic billiards on an $m$ by $n$ rectangle and relate the bounces along the $y$-axis to those along the $x$-axis. Therefore we extend our definition of the sign of a bounce and say that a bounce along any of the two vertical sides of the rectangle is positive if the ball is moving upwards as it bounces, and negative if the ball is moving downwards. Similarly we say that a bounce along the top of the rectangle is positive if the ball is moving to the right and negative if it's moving to the left.  

\begin{Lemma} \label{L:fourBounces}
Suppose that $m$ and $n$ are odd and that $m<n$. Then for $0 < 2k < m$, the four bounces at $(0, 2k)$, $(m-2k,m)$, $(n, m-2k)$, and $(n-m+2k, 0)$ all have the same sign.
\end{Lemma}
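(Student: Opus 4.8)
The plan is to reduce the equality of all four signs to two simpler facts: that the two bounces at the ends of a single diagonal trajectory segment always share a sign, and that the whole path has a central symmetry swapping the two relevant pairs of points. First I would restate the sign conventions in terms of velocity. Writing $(x(t),y(t))$ for the position at time $t$, a bounce off a vertical side ($x=0$ or $x=n$) is positive exactly when $\dot y>0$ there, while a bounce off the top or bottom ($y=m$ or $y=0$) is positive exactly when $\dot x>0$; in each case the relevant component is continuous through the bounce, so the sign is well defined.

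The first key observation is that $(0,2k)$ and $(m-2k,m)$ both lie on the line $x-y=-2k$, and the segment of that line joining them stays inside the rectangle and meets no corner (this is where $0<2k<m<n$ is used), so it is a single trajectory segment. Along such an up-right diagonal one has $\dot x=\dot y$, so the side bounce at $(0,2k)$ and the top bounce at $(m-2k,m)$ have the same sign. The identical argument applied to the line $x-y=n-m+2k$, which carries the segment between $(n-m+2k,0)$ and $(n,m-2k)$, shows those two bounces agree as well. It then remains only to link the pair $\{(0,2k),(m-2k,m)\}$ to the pair $\{(n,m-2k),(n-m+2k,0)\}$, and for this I would invoke a central symmetry. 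Because $m$ and $n$ are both odd, $T=\operatorname{lcm}(m,n)$ is an odd multiple of each, the path runs from $(0,0)$ to the opposite corner $(n,m)$, and the coordinate functions satisfy $x(T-t)=n-x(t)$ and $y(T-t)=m-y(t)$; equivalently the $180^\circ$ rotation $\rho\colon(x,y)\mapsto(n-x,m-y)$ about the centre carries the trajectory to itself with time reversed. Differentiating the relation for $y$ gives $\dot y(T-t)=\dot y(t)$. Since $\rho$ sends $(0,2k)$ to $(n,m-2k)$, if the former bounce occurs at time $t_1$ the latter occurs at $T-t_1$, so their $y$-velocities coincide; as both are side bounces, they have the same sign. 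Combining this with the two segment statements forces all four signs to be equal.

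The main obstacle I anticipate is making the central-symmetry step airtight. One must confirm that for $m,n$ both odd the path really ends at $(n,m)$, so that $\rho$ together with time reversal returns the path to itself rather than to a different trajectory, and that $T=\operatorname{lcm}(m,n)$ is an odd multiple of both $m$ and $n$; the latter holds because $m/\gcd(m,n)$ and $n/\gcd(m,n)$ are odd. The symmetry identities then reduce to the elementary triangle-wave identity $x(n-t)=n-x(t)$ together with its analogue for $y$, which I would verify directly from $x(s)+x(n-s)=n$. A minor point to dispatch at the outset is that the four points are genuine interior-edge bounces and not corners, which is precisely what the strict inequalities $0<2k<m$ guarantee.
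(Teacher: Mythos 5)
Your proposal is correct and follows essentially the same route as the paper: each of the two pairs $\{(0,2k),(m-2k,m)\}$ and $\{(n-m+2k,0),(n,m-2k)\}$ is linked by a single diagonal trajectory segment, and the two pairs are then linked by the $180^\circ$ rotational symmetry of the path about the centre, which holds because $m$ and $n$ odd force the path to end at $(n,m)$. Your velocity formalism and the explicit time-reversal identities $x(T-t)=n-x(t)$, $y(T-t)=m-y(t)$ are just a more detailed rendering of the paper's argument.
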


\begin{proof}
The two bounces at $(0, 2k)$ and $(m-2k,m)$ have the same sign because they are directly connected to each other, see Figure~\ref{F:symmetry1}. 
\begin{figure}[ht]
\begin{center}
\begin{tikzpicture} [scale=0.5]

\draw [lightgray] (0,0) grid (11,7);
\draw (0,0) rectangle (11,7);

\draw [pathColor, thick] (1.5, 2.5) -- (0,4) -- (3,7) -- (4.5,5.5);
\draw[pathColor, thick] (9.5,4.5)--(11, 3)--(8,0)--(6.5,1.5);

\node[left] at (0,4) {$(0,2k)$};
\node[above] at (3,7) {$(m-2k,m)$};
\node[right] at (11,3) {$(n,m-2k)$};
\node[below] at (8,0) {$(n-m+2k,0)$};
 
\foreach \x/\y in {}
 \draw[pathColor, thick] (\x +0.7,\y+0.3) -- (\x +0.7,\y+0.7) -- (\x +0.3,\y+0.7);  
  
  \foreach \x/\y in {10/2, 8/0,0/4, 2/6}
  \draw[pathColor, thick] (\x +0.3,\y+0.7) -- (\x +0.3,\y+0.3) -- (\x +0.7,\y+0.3);
  
  \foreach \x/\y in {10/3, 0/3}
  \draw[pathColor, thick] (\x +0.3,\y+0.3) -- (\x +0.7,\y+0.3) -- (\x +0.7,\y+0.7);
  
  \foreach \x/\y in {7/0, 3/6}
  \draw[pathColor, thick] (\x +0.3,\y+0.3) -- (\x +0.3,\y+0.7) -- (\x +0.7,\y+0.7);

\end{tikzpicture}
\end{center}
\caption{Provided $m$ and $n$ are odd, the four bounces indicated will have the same sign.}
\label{F:symmetry1}
\end{figure}
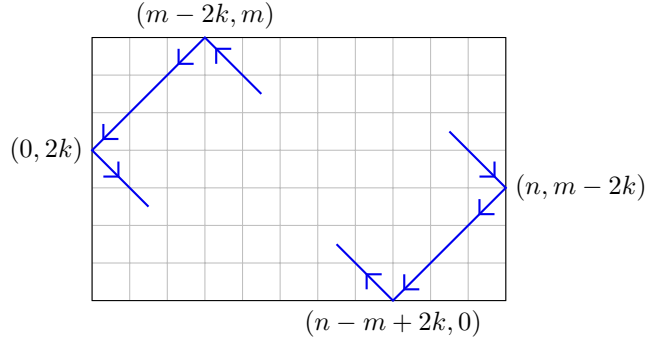
If the ball travels from $(0, 2k)$ to $(m-2k,m)$, both bounces are positive, while if it travels in the opposite direction as in the figure, both are negative. The conclusion holds also for ``zero-bounces'' when $m$ and $n$ have a common factor: If one of the bounces doesn't occur, neither does the other.
For the same reason the two bounces at $(n, m-2k)$ and $(n-m+2k, 0)$ must have the same sign.

But actually all four bounces must have the same sign because of a rotational symmetry: Since $m$ and $n$ are odd, the path will end at the upper right corner $(n,m)$. Therefore if we rotate the path $180$ degrees around the center of the rectangle and reverse its direction, we get back the same path. This symmetry shows that the bounces at $(0,2k)$ and at $(n,m-2k)$ must have the same sign, which means that all four signs are equal.
\end{proof}

\begin{Prop} \label{P:almostReciprocity}
If $m$ and $n$ are odd and $m<n$, then
\begin{equation} \label{almostReciprocity}
\left(\frac{m}{n}\right)\left(\frac{n}{m}\right) = \left(\frac{m}{n-m}\right)
\end{equation}
\end{Prop}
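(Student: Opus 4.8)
The plan is to interpret all three Legendre symbols as products of signs of base bounces of a single $m$ by $n$ billiard path, and then to isolate the contribution of the left sub-rectangle of width $n-m$. First I would dispose of the case $\gcd(m,n)>1$: since $\gcd(m,n-m)=\gcd(m,n)$, all three symbols then vanish and \eqref{almostReciprocity} reads $0=0$. So assume $\gcd(m,n)=1$, in which case there is a genuine base bounce at every even abscissa.

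By definition $(m|n)$ is the product of the signs of all base bounces, occurring at $x=2,4,\dots,n-1$. Reflecting the path in the diagonal $y=x$ turns the $m$ by $n$ path into the $n$ by $m$ path and turns rightward base bounces into upward bounces along the left edge; hence $(n|m)$ equals the product of the signs of the left-edge bounces at the points $(0,2k)$, $k=1,\dots,(m-1)/2$. Now Lemma~\ref{L:fourBounces} identifies the sign at $(0,2k)$ with the sign of the base bounce at $(n-m+2k,0)$, so $(n|m)$ is exactly the product of the base-bounce signs with $x\in\{n-m+2,\dots,n-1\}$. Writing $A$ for the base bounces with $0<x\le n-m$ and $B$ for those with $n-m<x<n$, and letting $P_A,P_B$ denote the corresponding sign products, we get $(m|n)=P_A P_B$ and $(n|m)=P_B$; since every sign is $\pm1$, multiplying gives $(m|n)(n|m)=P_A P_B^2=P_A$.

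It remains to show $P_A=(m|n-m)$, and this is the crux. The key geometric observation is that the $m$ by $m$ square $[n-m,n]\times[0,m]$ behaves, as seen from its left edge, exactly like a vertical mirror: a $45^\circ$ trajectory entering the segment $x=n-m$ at height $y_0$ moving rightward leaves that same segment at the same height $y_0$ moving leftward, as one checks by following the trajectory through its bounces inside the square. Consequently the portion of the $m$ by $n$ path lying in the left rectangle $R=[0,n-m]\times[0,m]$, with each excursion into the square collapsed to a reflection off $x=n-m$, is literally the $m$ by $n-m$ billiard path started at the origin. In particular the base bounces in $A$ with $0<x<n-m$ coincide, signs included, with the base bounces of the $m$ by $n-m$ billiard. (The base bounces occurring inside the square contribute to $B$, not $A$, and have already been accounted for through Lemma~\ref{L:fourBounces}.)

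Finally I would treat the boundary point $x=n-m$ separately. In the $m$ by $n-m$ billiard this is the terminal corner $(n-m,0)$, which is not counted as a bounce (or would count as $+1$); in the $m$ by $n$ billiard the ball reaches $(n-m,0)$ moving down-right and then runs along the diagonal of the square up-right to the terminal corner $(n,m)$, so the horizontal direction never changes and this bounce is positive. Hence $P_A=(+1)\cdot(m|n-m)=(m|n-m)$, which together with $(m|n)(n|m)=P_A$ yields \eqref{almostReciprocity}. I expect the main obstacle to be verifying the mirror property of the square and the careful bookkeeping of the single boundary bounce at $x=n-m$; the rest is Lemma~\ref{L:fourBounces} together with the cancellation $P_B^2=1$.
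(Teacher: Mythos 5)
Your proof is correct and follows essentially the same route as the paper's: cancel the left-edge bounces against the base bounces in $(n-m,n)$ via Lemma~\ref{L:fourBounces}, note that the bounce at $(n-m,0)$ is positive because it connects directly to the corner $(n,m)$, and use the ``mirror'' behaviour of the rightmost $m$ by $m$ square to identify the remaining base bounces with those of the $m$ by $n-m$ billiard. The only difference is that you spell out explicitly (via the reflection in $y=x$ and the tracing of the tour through the square) two steps the paper leaves implicit.
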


\begin{proof} 
We can assume that $m$ and $n$ are relatively prime, since otherwise both sides of the equation are zero.
The left hand side $(m|n)(n|m)$ is the product of the signs of the bounces along the $x$-axis and along the $y$-axis in $m$ by $n$ arithmetic billiards. By Lemma~\ref{L:fourBounces} we can cancel all the bounces along the $y$-axis against the bounces to the right of the point $(n-m,0)$ on the $x$-axis.
We can also cancel the bounce at the point $(n-m,0)$ since it is directly connected to the endpoint $(n, m)$ and is therefore positive.
The only bounces that remain are those along the $x$-axis to the left of the point $(n-m,0)$. 

The next observation is that the movement of the ball inside the leftmost ``corridor'' of width $n-m$ is the same as if the ball had been confined to this area by a vertical wall at $x=n-m$, see Figure~\ref{F:reciprocity2}. Whenever the ball reaches the line $x=n-m$ from the left, it will make a tour of the rightmost $m$ by $m$ square and return to the same point with a $90$ degree change of direction, continuing as if it had bounced off a wall at $x=n-m$. 

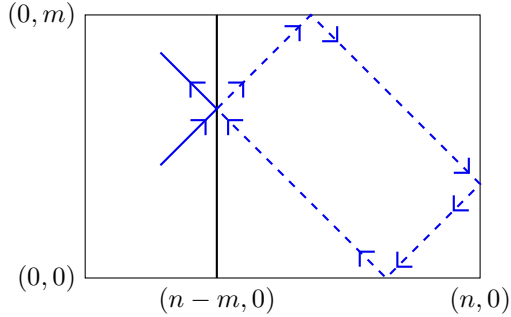
\begin{figure}[ht]
\begin{center}
\begin{tikzpicture} [scale=0.5]

\draw (0.5,0) rectangle (11,7);
\draw [thick] (4,0)--(4,7);

\draw [pathColor, thick] (2.5,3) -- (4,4.5) -- (2.5,6);
\draw [pathColor, thick, dashed] (4,4.5) -- (6.5,7)--(11,2.5)--(8.5,0)--cycle;
  
 \foreach \x/\y in {3/3.5, 4/4.5, 5.5/6}
  \draw[pathColor, thick] (\x +0.7,\y+0.3) -- (\x +0.7,\y+0.7) -- (\x +0.3,\y+0.7);  

  \foreach \x/\y in {10/1.5, 8.5/0}
  \draw[pathColor, thick] (\x +0.3,\y+0.7) -- (\x +0.3,\y+0.3) -- (\x +0.7,\y+0.3);

  \foreach \x/\y in {6.5/6, 10/2.5}
  \draw[pathColor, thick] (\x +0.3,\y+0.3) -- (\x +0.7,\y+0.3) -- (\x +0.7,\y+0.7);

  \foreach \x/\y in {7.5/0, 4/3.5, 3/4.5}
  \draw[pathColor, thick] (\x +0.3,\y+0.3) -- (\x +0.3,\y+0.7) -- (\x +0.7,\y+0.7);
  
\node[below] at (4,0) {$(n-m,0)$};
\node[left] at (0.5,0) {$(0,0)$};
\node[left] at (0.5,7) {$(0,m)$};
\node[below] at (11,0) {$(n,0)$};

\end{tikzpicture}
\end{center}
\caption{Each time the ball reaches the line $x=n-m$ from the left, it will make a tour through the rightmost $m$ by $m$ square and come back to the same point, continuing as if it had bounced against a wall at $x=n-m$.}
\label{F:reciprocity2}

\end{figure}
 
Therefore the bounces that haven't been canceled from the left hand side $(m|n)(n|m)$ of equation~\eqref{almostReciprocity} have the same signs as those that occur in an $m$ by $n-m$ rectangle, and their product is given by the right hand side of \eqref{almostReciprocity}.
\end{proof}

It remains to show that the right hand side of equation \eqref{almostReciprocity} 
depends only on the congruence classes modulo 4 of the numbers involved. In the following lemma we let $d$ stand for the difference $n-m$, which is even. 

\begin{Lemma} \label{L:mod4}
Suppose $m$ and $d$ are relatively prime with $m$ odd and $d$ even. Then 
\[ 
\left(\frac{m}{d}\right) = 
\begin{cases}
1, & \text{\rm if $d\equiv 2$ (mod $4$),}\\
(-1)^{(m-1)/2}, & \text {\rm if $d\equiv 0$ (mod $4$).} 
\end{cases}
\]
\end{Lemma}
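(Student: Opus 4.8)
The plan is to prove Lemma~\ref{L:mod4} using the billiards interpretation of the symbol $(m|d)$ directly, since this is the framework in which everything has been developed. We have an $m$ by $d$ rectangle with $m$ odd and $d$ even, and $\gcd(m,d)=1$. The symbol $(m|d)$ is the product of the signs of the bounces along the base, occurring at the points $(2,0),(4,0),\dots$ at even distance from the origin. Because $d$ is even and $m$ is odd, the path ends not at the opposite corner but at $(d,0)$ or $(0,m)$; since $\operatorname{lcm}(m,d)=md$ and the base is traversed at even times $2m,4m,\dots$, I expect the relevant endpoint to be $(d,0)$, reached at time $md$. The bounces along the base thus occur at times $2m,4m,\dots,(d-2)m$, landing at the even points $2,4,\dots,d-2$ in some order, each with a sign recording the direction of travel.

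First I would read off the sign of each bounce from the direction of motion, exactly as in the proof of Theorem~\ref{T:legendre}. By Lemma~\ref{L:timeAndSpace}, the bounce whose base-point is $x$ carries the sign $\varepsilon$ determined by $\varepsilon x \equiv t \pmod{d}$, where $t$ is the even multiple of $m$ at which it occurs. Multiplying these congruences over all $d/2-1$ interior base-bounces gives, after cancelling the common product $2\cdot 4\cdots(d-2)$ of the even residues, a clean relation $(m|d)\equiv m^{(d-2)/2}$ times a correction, but because $d$ is not prime the cancellation is not the whole story; instead I would argue combinatorially by counting, for each even $x$ in $\{2,4,\dots,d-2\}$, whether the ball passes through $x$ moving rightward or leftward. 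A bounce at $x$ is negative precisely when the ball arrives moving right-to-left, i.e.\ when the representative of $\pm x$ forced to lie in $(-d/2,d/2)$ is negative; so $s$, the number of negative bounces, counts how many even multiples of $m$ in $\{2m,4m,\dots\}$ reduce modulo $d$ to a residue in $(d/2,d)$ rather than $(0,d/2)$. This is a Gauss-lemma style count, and $(m|d)=(-1)^s$.

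The heart of the argument is then to evaluate this count modulo $2$ in terms of $d \bmod 4$. I would split on the two cases. When $d\equiv 0\pmod 4$, the even residues $2,4,\dots,d-2$ are symmetric about $d/2$, and $d/2$ is itself even; multiplication by the odd unit $m$ permutes them, and the parity of the number mapped into the upper half $(d/2,d)$ is governed by the same mechanism that yields Euler's criterion modulo the odd part, producing the factor $(-1)^{(m-1)/2}$. When $d\equiv 2\pmod 4$, the midpoint $d/2$ is odd, so the symmetric pairing of the even points about $d/2$ behaves differently: each complementary pair $\{x,d-x\}$ of even points is handled so that the negative contributions cancel in pairs, forcing $s$ to be even and hence $(m|d)=1$. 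The cleanest way to see both cases uniformly may be to reindex the bounces by the factor $2$ (writing $x=2y$, $d=2e$) and reduce to a statement about the sign of the permutation $y\mapsto my \pmod e$, after which the parity of $e$ determines the two cases.

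\textbf{The main obstacle} I anticipate is the $d\equiv 2\pmod 4$ case, where $e=d/2$ is odd and the naive Gauss-lemma count does not immediately collapse. Here the symmetry $x\mapsto d-x$ on the even base-points, combined with the fact that a bounce at $x$ and its mirror at $d-x$ receive opposite-or-equal signs depending on parity, must be exploited to force cancellation in pairs; getting the pairing exactly right (and confirming that no point is its own mirror, which uses $\gcd(m,d)=1$ and $d/2$ odd) is the delicate point. An attractive alternative that sidesteps the raw counting is to invoke Proposition~\ref{P:supplement} together with multiplicativity in the ``numerator'': writing $d=2e$ with $e$ odd and using $(m|2)$ and $(m|e)$ via the billiards interpretation could reduce the lemma to the already-established value of $(2|n)$ and a reciprocity step, but since multiplicativity of $(m|n)$ in $n$ has not yet been proved in the excerpt, I would prefer the direct counting argument and reserve the reduction only as a consistency check.
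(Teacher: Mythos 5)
There is a genuine gap: you set up the right framework (read each bounce's sign from the direction of travel and reduce to a parity count), but the two case conclusions are asserted rather than proved. For $d\equiv 0\pmod 4$ you appeal to ``the same mechanism that yields Euler's criterion modulo the odd part,'' which proves nothing here --- $d$ is not prime, and the claimed answer $(-1)^{(m-1)/2}$ depends on $m\bmod 4$, not on any power-residue structure modulo $d$. For $d\equiv 2\pmod 4$ you say the mirror pair $\{x,d-x\}$ receives ``opposite-or-equal signs depending on parity'' and that this ``forces'' cancellation; deciding whether those signs are equal or opposite \emph{is} the lemma, and you leave it open. There is also a concrete error in your counting criterion: the bounce at time $t=2km$ is negative iff $t\bmod 2d\in(d,2d)$, i.e.\ iff $km\bmod d\in(d/2,d)$ --- not iff the even multiple $2km$ reduces mod $d$ into $(d/2,d)$, and certainly not iff the landing position $x$ lies in $(d/2,d)$ (in Figure~\ref{F:billiards1} the bounces at $x=4$ and $x=6$ are both in the upper half but have opposite signs).

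The idea your sketch is missing is the one the paper uses. Since $m$ is odd and $d$ even with $\gcd(m,d)=1$, the path ends at $(d,0)$, so it is invariant under reflection in the line $x=d/2$ combined with reversal of direction; hence the bounces at $(2k,0)$ and $(d-2k,0)$ have \emph{equal} signs and cancel in the product. If $d\equiv 2\pmod 4$ there is no fixed point and $(m|d)=1$ immediately. If $d\equiv 0\pmod 4$ the only surviving contribution is the self-paired bounce at $(d/2,0)$, which occurs at the halfway time $md/2$; by then exactly half of the $m-1$ left--right wall bounces have happened, so the ball is moving rightward iff $(m-1)/2$ is even, giving the sign $(-1)^{(m-1)/2}$. (Your position pairing $x\leftrightarrow d-x$ is in fact the right one, and its time-domain shadow $k\leftrightarrow d/2-k$ would also work, using $km+(d/2-k)m\equiv d/2\pmod d$; but you must actually establish that paired signs agree and then evaluate the sign of the fixed bounce, neither of which your proposal does.) Your fallback via multiplicativity and Proposition~\ref{P:supplement} is rightly set aside, since multiplicativity of the symbol in the numerator is not available at this point.
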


\begin{proof}
When $d$ is even and $m$ is odd, the path of the ball in arithmetic billiards on an $m$ by $d$ rectangle will end at the lower right corner $(d,0)$. Therefore it has another symmetry: If we reflect the path in the vertical line $x=d/2$ and reverse its direction, we get back the same path. 
This means that a bounce at a point $(2k,0)$ corresponds to a bounce of the same sign at the point $(d-2k,0)$. 

If $d\equiv 2$ (mod 4) this is all there is to it: The bounces will cancel in pairs and $(m|d)=1$.

If on the other hand $d\equiv 0$ (mod 4), then there will be a bounce at the midpoint $(d/2,0)$ that doesn't cancel, and $(m|d)$ will be determined by the sign of this bounce. Again by the symmetry it follows that when the ball reaches the point $(d/2,0)$ it has travelled half-way to the endpoint $(d,0)$. This means that it has made exactly half of its left-right bounces. The total number of such bounces is $m-1$, and the direction of the ball at the moment it bounces at $(d/2,0)$ is therefore determined by the parity of $(m-1)/2$ in the way required.  
\end{proof} 

\begin{proof} [Proof of Quadratic Reciprocity]
To finish the proof of Theorem~\ref{T:qr}, suppose that $m$ and $n$ are odd and relatively prime. We can assume without loss of generality that $m<n$. Since $n-m$ is even, Proposition~\ref{P:almostReciprocity} and Lemma~\ref{L:mod4} show that
\[
\left(\frac{m}{n}\right)\left(\frac{n}{m}\right) = \left(\frac{m}{n-m}\right) 
= \begin{cases}
1, & \text{\rm if $n-m\equiv 2$ (mod $4$),}\\
(-1)^{(m-1)/2}, & \text {\rm if $n-m\equiv 0$ (mod $4$).} 
\end{cases}
\]
Here it's easy to check that the right hand side is equal to $(-1)^{(m-1)(n-1)/4}$ as required: The only time it equals $-1$ is when $m\equiv n \equiv 3$ (mod 4).
\end{proof}

\section{Parity checkers and reciprocity again} \label{S:checkers}
We return to parity checkers to see how it relates to arithmetic billiards, and to give a slightly different proof of quadratic reciprocity.

Let's say that a \emph{puzzle} in parity checkers consists of a set of pebbles on light squares of a checkerboard, and that a solution to such a puzzle is a set of checkers on dark squares such that the light squares with an odd number of neighbouring checkers are precisely those that have pebbles. Counting modulo~2, a puzzle is a system of linear equations where the constant terms are described by the configuration of pebbles.

A naive approach to a given puzzle is to go through the rows from top to bottom and for each light square, if the constraint on its number of neighbours isn't satisfied when we get to it, put a checker right under it. This way we can easily satisfy all constraints except those in the bottom row. This process is similar to so-called light chasing in the game \emph{Lights Out} \cite{lightsOut}. 

By linearity it follows that for a given shape of the checkerboard, if we can solve all puzzles that consist of just a single pebble in the bottom row, then we can solve all puzzles. 

\begin{Prop} \label{P:checkersInverse1}
For positive integers $m$ and $n$, if $\gcd(m,n)=1$ then every pebble puzzle on an $m-1$ by $n-1$ board has a unique solution. 
\end{Prop}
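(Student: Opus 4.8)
The plan is to read the puzzle as a single $\mathbb{F}_2$-linear map and prove that map is invertible. Index the squares of the $(m-1)\times(n-1)$ board by $(x,y)$ with $0\le x\le n-2$ and $0\le y\le m-2$, the corner $(0,0)$ being dark, so that $(x,y)$ is dark exactly when $x+y$ is even. A checker on a dark square toggles the parity at each of its orthogonally adjacent light squares, and the map $A$ sending a checker configuration to the resulting pebble configuration is the biadjacency matrix over $\mathbb{F}_2$ of the $(m-1)\times(n-1)$ grid graph, with rows indexed by light squares and columns by dark squares. First I would record that $A$ is square: the board has $(m-1)(n-1)$ cells, and since $\gcd(m,n)=1$ forbids $m$ and $n$ from both being even, $(m-1)(n-1)$ is even, so there are equally many light and dark squares. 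Over $\mathbb{F}_2$ a square matrix is bijective as soon as it is injective (or surjective), so ``every puzzle has a unique solution'' is equivalent to $A$ being nonsingular, and it suffices to show either that $0$ is the only checker configuration producing no pebbles, or that every pebble configuration is attainable.

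Next I would use the light-chasing procedure described just before the statement to triangularize $A$. Pair each light square in rows $1,\dots,m-2$ with the dark square directly beneath it; since $(x,y)$ light forces $(x,y-1)$ dark, this is a bijection onto the dark squares in rows $0,\dots,m-3$, leaving exactly the bottom-row light squares and the top-row dark squares unpaired. A checker beneath a light square $L$ affects $L$ itself and otherwise only light squares in strictly lower rows than $L$, so each column of $A$ has its highest nonzero entry at its paired row; ordering constraints and variables by decreasing row thus makes the paired block lower triangular with $1$'s on the diagonal, hence invertible. Eliminating this block (a Schur complement over $\mathbb{F}_2$, which is precisely what light-chasing computes) reduces nonsingularity of $A$ to nonsingularity of the square leftover block $S$ indexed by the bottom-row light squares against the top-row dark squares. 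Equivalently, and as already flagged in the text, it suffices to show that every puzzle with pebbles confined to the bottom row is solvable, and by linearity that each single bottom-row pebble can be produced.

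The main obstacle is exactly this reduced, one-dimensional problem, which is where coprimality must enter; everything above is formal and uses only that the board is balanced. The block $S$ propagates a top-row checker pattern downward through the light-chasing, and I expect it to be governed by a second-order linear recurrence across rows (a transfer matrix with Dirichlet boundaries at $x=0$ and $x=n-2$), so that its invertibility becomes a coprimality statement for continuant/Chebyshev-type polynomials over $\mathbb{F}_2$, the analogue of $\gcd(U_{m-1},U_{n-1})$ tracking $\gcd(m,n)$. In the spirit of this paper I would instead try to solve the single bottom-row puzzle explicitly through the billiard path: the trajectory on the $m\times n$ rectangle reaches every base point at even distance from the origin precisely when $\gcd(m,n)=1$ (Section~\ref{S:billiards}), and I would aim to read off from the path a checker placement realizing a prescribed bottom-row parity. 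As a check that this target is correct, note that $\det A\equiv\operatorname{per}A\pmod 2$ counts domino tilings of the board modulo $2$, and that count is odd exactly when $\gcd(m,n)=1$; so the equivalence to be proved is right, and the genuine work lies in exhibiting the explicit bottom-row solution rather than in the surrounding bookkeeping.
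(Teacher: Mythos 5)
Your surrounding bookkeeping is correct and matches the paper's: $\gcd(m,n)=1$ forces at least one of $m-1$, $n-1$ to be even, so the board is balanced and the map is square over $\mathbb{F}_2$, making injectivity, surjectivity and bijectivity equivalent; light chasing then reduces everything to solving the single-pebble puzzles with the pebble in the bottom row. But at exactly the point you yourself flag as ``the genuine work'' you stop. Saying you would ``aim to read off from the path a checker placement realizing a prescribed bottom-row parity'' names the target without supplying the construction or the verification, and that construction is the entire content of the proposition. This is a genuine gap, not a stylistic omission.

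For the record, the paper's device is a two-coloring of the billiard path. Since $\gcd(m,n)=1$ the path bounces at every base point $(2j,0)$, in particular at $(2k,0)$; color the portion of the path before that bounce with one color and the portion after with another, and place a checker on each dark square (each self-crossing of the path) where the two crossing branches carry different colors. Every light square of the checkerboard sits at the center of a small tilted square traced by the path, and walking once around that closed loop the number of color changes is even; a color change occurs only at a bichromatic crossing (where a checker sits) or at the single artificial color switch at the bounce $(2k,0)$. Hence every light square has an even number of neighboring checkers except the bottom-row square at $2k$, which has an odd number --- precisely the single-pebble puzzle. Your alternative route via a transfer matrix and a $\gcd$ identity for continuant/Chebyshev-type polynomials over $\mathbb{F}_2$ could plausibly be made to work, but it is likewise only sketched; as written, the proposal is a correct framing of the problem rather than a proof.
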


\begin{proof}
Suppose that on an $m-1$ by $n-1$ checkerboard we are faced with a puzzle with a single pebble located on square number $2k$ of the bottom row. 
Assuming that $\gcd(m,n)=1$, arithmetical billiards on an $m$ by $n$ rectangle will lead to a path that bounces at all points at even distance from the origin along the base, including the point $(2k,0)$. We color this path with two colors, one before the bounce at $(2k,0)$, and another after that bounce. Then we place checkers on the squares of the  checkerboard that correspond to points where the path crosses itself and the two parts that cross have distinct colors, as in Figure~\ref{F:twoColoring1}. 

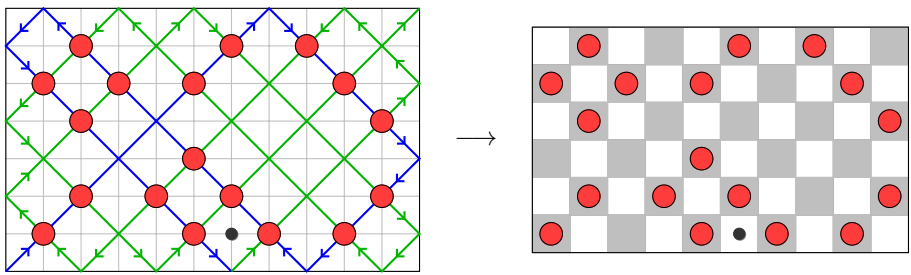
\begin{figure} [ht]
\begin{center}
\begin{tikzpicture} [scale=0.5]

\draw [lightgray] (0,0) grid (11,7);
\draw [black] (0,0) rectangle (11,7);

 \draw[firstColor, thick] (0,0)--(7,7)--(11,3)--(8,0)--(1,7)--(0,6)--(6,0);
 
 \draw[secondColor, thick] (6,0)--(11,5)--(9,7)--(2,0)--(0,2)--(5,7)--(11,1)--(10,0)
 --(3,7)--(0,4)--(4,0)--(11,7);    
 
 \foreach \x/\y in {0/0, 6/6}
  \draw[firstColor, thick] (\x +0.6,\y+0.4) -- (\x +0.6,\y+0.6) -- (\x +0.4,\y+0.6);  
  
  \foreach \x/\y in {10/2, 8/0, 0/6}
  \draw[firstColor, thick] (\x +0.4,\y+0.6) -- (\x +0.4,\y+0.4) -- (\x +0.6,\y+0.4);
  
 \foreach \x/\y in {7/6, 10/3, 0/5, 5/0}
 \draw[firstColor, thick] (\x +0.4,\y+0.4) -- (\x +0.6,\y+0.4) -- (\x +0.6,\y+0.6);
  
  \foreach \x/\y in {7/0, 1/6}
 \draw[firstColor, thick] (\x +0.4,\y+0.4) -- (\x +0.4,\y+0.6) -- (\x +0.6,\y+0.6); 

\foreach \x/\y in {6/0, 10/4, 0/2, 4/6, 4/0, 10/6}
  \draw[secondColor, thick] (\x +0.6,\y+0.4) -- (\x +0.6,\y+0.6) -- (\x +0.4,\y+0.6);  
 
  \foreach \x/\y in {8/6, 2/0, 10/0, 2/6, 0/4}
  \draw[secondColor, thick] (\x +0.4,\y+0.6) -- (\x +0.4,\y+0.4) -- (\x +0.6,\y+0.4);
  
 \foreach \x/\y in {5/6, 10/1, 0/3, 3/0}
 \draw[secondColor, thick] (\x +0.4,\y+0.4) -- (\x +0.6,\y+0.4) -- (\x +0.6,\y+0.6);
  
  \foreach \x/\y in {10/5, 9/6, 1/0,0/1, 9/0, 3/6}
 \draw[secondColor, thick] (\x +0.4,\y+0.4) -- (\x +0.4,\y+0.6) -- (\x +0.6,\y+0.6);
 
 \foreach \x/\y in {1/1, 1/5, 2/2, 2/4, 2/6, 3/5, 4/2, 5/1, 5/3, 5/5, 6/2, 6/6, 7/1, 8/6,
 9/1, 9/5, 10/2, 10/4}
 {
\filldraw[checkersColor] (\x, \y) circle (0.3cm);
\draw [black] (\x, \y) circle (0.3cm);
}

{
\filldraw [gray] (6.02, 0.98) circle (0.15cm);
\filldraw [black!80!white] (6, 1) circle (0.15cm);
}

\node at (12.5,3.5) {$\longrightarrow$}; 

\begin{scope} [xshift = 14cm, yshift=0.5cm] 

\draw [gray] (0,0) grid (10,6);

\foreach \x in {0, 2, 4, 6, 8}
  \foreach \y in {0, 2, 4}
  \filldraw[lightgray] (\x,\y) rectangle (\x+1,\y+1);
  
  \foreach \x in {1, 3, 5, 7, 9}
  \foreach \y in {1, 3, 5}
  \filldraw[lightgray] (\x,\y) rectangle (\x+1,\y+1);
 
 \draw (0,0) rectangle (10,6); 

\foreach \x/\y in {1/1, 1/5, 2/2, 2/4, 2/6, 3/5, 4/2, 5/1, 5/3, 5/5, 6/2, 6/6, 7/1, 8/6,
 9/1, 9/5, 10/2, 10/4}
 {
\filldraw[checkersColor] (\x-0.5, \y-0.5) circle (0.3cm);
\draw [black] (\x-0.5, \y-0.5) circle (0.3cm);
}
{
\filldraw [gray] (6.02-0.5, 0.98-0.5) circle (0.15cm);
\filldraw [black!80!white] (5.5, 0.5) circle (0.15cm);
}

\end{scope}
 
\end{tikzpicture}
\end{center}

\caption{To solve the 6 by 10 checkers puzzle with a single pebble at square 6 of the bottom row, we two-color the path in 7 by 11 arithmetic billiards, changing color at the bounce at $(6,0)$. Then we place checkers on those dark squares of the checkerboard that correspond to two-colored crossings.}
\label{F:twoColoring1}
\end{figure}

With this pattern of checkers, an ordinary light square on the checkerboard will have an even number of checkers in its immediate neighbourhood, since those checkers correspond to color-changes as we walk around one of the tilted squares formed by the path in arithmetic billiards. The one exception is the square with the pebble, since at the point $(2k,0)$ of the billiards path we have an artificial change of color that doesn't correspond to a checker.

This means that we have solved this single-pebble puzzle. By linearity, all puzzles with pebbles only in the bottom row can be solved, and by ``light chasing'', all puzzles on the $m-1$ by $n-1$ board can be solved.

Since we have assumed that $\gcd(m,n)=1$, at least one of $m-1$ and $n-1$ is even, which means that the board has equally many light and dark squares. Therefore it follows, by linearity or pigeon-hole reasoning, that each puzzle must have a unique solution. 
\end{proof}

For completeness we establish the following converse of Proposition~\ref{P:checkersInverse1}, even though it will not be needed in the following.

\begin{Prop} \label{P:checkersInverse2}
If $m$ and $n$ have a common factor, then there are nontrivial solutions to the empty pebble puzzle on an $m-1$ by $n-1$ checkerboard, and by linearity no pebble puzzle can have a unique solution.
\end{Prop}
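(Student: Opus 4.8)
The plan is to phrase the puzzle as a single linear map over $\mathbb{F}_2$ and to show that it has a nontrivial kernel exactly when $\gcd(m,n)>1$. Label the squares of the $(m-1)$ by $(n-1)$ board by pairs $(i,j)$ with $1\le i\le n-1$ and $1\le j\le m-1$, a square being dark when $i+j$ is even and light when $i+j$ is odd. A checker configuration is then a function $c$ on the dark squares, and the associated puzzle is the function $Mc$ on the light squares given by $(Mc)(i,j)=c(i-1,j)+c(i+1,j)+c(i,j-1)+c(i,j+1)$, where terms referring to squares off the board are dropped. Thus $M$ is the dark-to-light block of the adjacency operator $A$ of the grid graph on the board, and a nontrivial solution to the empty puzzle is precisely a nonzero element of $\ker M$. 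Once $\ker M\ne 0$ is known, the final clause of the statement is immediate: any two solutions of a given puzzle differ by an element of $\ker M$, so no puzzle can have a unique solution.

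First I would dispose of the unbalanced case by counting. The board has one more dark square than light square when $m-1$ and $n-1$ are both odd, that is when $m$ and $n$ are both even, and otherwise the two colors occur equally often. When $m$ and $n$ are both even we therefore have more dark squares than light ones, so $M$ has more columns than rows and $\ker M\ne 0$ for free; and of course $\gcd(m,n)\ge 2$ in this case. It remains to treat the balanced case, in which $M$ is square; there $\ker M\ne 0$ is equivalent to $\det M=0$, and since $A=\begin{pmatrix}0&M^{\mathsf T}\\ M&0\end{pmatrix}$ satisfies $\det A=\det(M)^2$ over $\mathbb{F}_2$, it is equivalent to show that the adjacency operator $A$ of the grid graph is singular. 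Note that whenever $\gcd(m,n)>1$ but $m,n$ are not both even, the board is automatically balanced, so this dichotomy is exhaustive.

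The heart of the argument is then to determine when $A$ is singular over $\mathbb{F}_2$. Here I would use that $A$ is the Kronecker sum $A_{n-1}\otimes I+I\otimes A_{m-1}$ of the adjacency operators of the two path graphs on $n-1$ and $m-1$ vertices. Since the two summands commute, they are simultaneously triangularizable over $\overline{\mathbb{F}_2}$, so the eigenvalues of $A$ are the sums $\lambda+\mu$ of eigenvalues of $A_{n-1}$ and $A_{m-1}$; in characteristic $2$ such a sum vanishes precisely when $\lambda=\mu$. Hence $A$ is singular iff the two path operators share an eigenvalue, i.e. iff their characteristic polynomials have a common factor. Writing $U_k$ for the characteristic polynomial of the $k$-vertex path, determined by $U_0=1$, $U_1=\lambda$ and the Chebyshev-type recurrence $U_k=\lambda U_{k-1}+U_{k-2}$, I would invoke the strong-divisibility identity $\gcd(U_{n-1},U_{m-1})=U_{\gcd(m,n)-1}$ in $\mathbb{F}_2[\lambda]$ (the analogue, with a shift of indices, of $\gcd(F_a,F_b)=F_{\gcd(a,b)}$ for Fibonacci numbers). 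This common factor has degree $\gcd(m,n)-1$, and so has a root in $\overline{\mathbb{F}_2}$ exactly when $\gcd(m,n)>1$. Combined with the balanced/unbalanced split this yields $\ker M\ne 0$ whenever $\gcd(m,n)>1$, and conversely recovers the invertibility of Proposition~\ref{P:checkersInverse1}.

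The main obstacle is this last step: proving the gcd identity over $\mathbb{F}_2$ and matching its degree to $\gcd(m,n)$. This is a polynomial identity that follows from the recurrence by the same Euclidean induction as for Fibonacci numbers, so it is routine but must be checked carefully in characteristic $2$, where the signs in the usual Chebyshev relations collapse. An alternative in the paper's own spirit, avoiding the spectral language, is to run the ``light chasing'' described earlier as a transfer-matrix recurrence: solving the homogeneous system column by column turns the existence of a nonzero kernel element into the vanishing of a product of companion matrices subject to boundary conditions at the two ends, and the same family $U_k$ reappears as the entries of these products, with nonzero solutions governed again by $U_{\gcd(m,n)-1}$. Either way the crux is identifying $\gcd(m,n)>1$ as the exact singularity condition, which I expect to be the only genuinely delicate point. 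Unlike Proposition~\ref{P:checkersInverse1}, the kernel element here is \emph{not} read off from the self-crossings of a single billiard path, which is why I would commit to the linear-algebra route rather than a direct billiard construction.
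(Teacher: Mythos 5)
Your argument is correct, but it is a genuinely different proof from the one in the paper. The paper's proof is a three-line construction that reuses the billiard path: color each unit diagonal of the $m$ by $n$ rectangle according to whether it lies on the path or not, and put checkers at the bichromatic crossings. Around every light square the number of color changes is even, so this solves the empty puzzle, and it is nontrivial because the path exits early and so passes through $(1,1)$ only once, making $(1,1)$ a bichromatic crossing. In particular your closing remark --- that no direct billiard construction of a kernel element is available here --- is not right; the kernel element is read off from the crossings of path diagonals with non-path diagonals, rather than from self-crossings of a two-colored path as in Proposition~\ref{P:checkersInverse1}. Your route instead identifies the puzzle map $M$ as a block of the Kronecker sum $A_{n-1}\otimes I + I\otimes A_{m-1}$, reduces singularity over $\mathbb{F}_2$ to the two path graphs sharing an eigenvalue in $\overline{\mathbb{F}_2}$, and invokes the strong divisibility $\gcd(U_{m-1},U_{n-1})=U_{\gcd(m,n)-1}$ for the characteristic polynomials of paths. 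The step you flag as delicate does go through in characteristic 2: the identity $U_{a+b}=U_aU_b+U_{a-1}U_{b-1}$ holds over $\mathbb{F}_2[\lambda]$, and together with $\gcd(U_k,U_{k+1})=1$ it yields the subtractive Euclidean descent, so the common factor has degree exactly $\gcd(m,n)-1$. What your approach buys is more than the statement asks for: it simultaneously recovers the invertibility direction of Proposition~\ref{P:checkersInverse1} and locates the exact singularity condition spectrally; what it costs is considerably heavier machinery (simultaneous triangularization, an auxiliary polynomial identity) where the paper needs only one extra observation about the billiard path it has already built.
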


\begin{proof}
When $m$ and $n$ have a common factor, the path in arithmetic billiards exits before time $mn$ and in particular visits the point $(1,1)$ only once. We now color the diagonals according to whether they are part of the path or not as in Figure~\ref{F:twoColoring2}. Putting checkers at the two-colored crossings gives a nontrivial solution to the empty pebble-puzzle.
\end{proof} 

\begin{figure} [ht]
\begin{center}
\begin{tikzpicture} [scale=0.55]

\draw [lightgray] (0,0) grid (9,6);
\draw [black] (0,0) rectangle (9,6);

 \draw[firstColor, thick] (0,0)--(6,6)--(9,3)--(6,0)--(0,6);
 \draw[secondColor, thick] (0,2)--(4,6)--(9,1)--(8,0)--(2,6)--(0,4)--(4,0)--(9,5)--(8,6)--(2,0)--cycle;    
 
 \foreach \x/\y in {0/0, 5/5}
  \draw[firstColor, thick] (\x +0.6,\y+0.4) -- (\x +0.6,\y+0.6) -- (\x +0.4,\y+0.6);  

  \foreach \x/\y in {8/2, 6/0}
  \draw[firstColor, thick] (\x +0.4,\y+0.6) -- (\x +0.4,\y+0.4) -- (\x +0.6,\y+0.4);
    
 \foreach \x/\y in {6/5, 8/3}
 \draw[firstColor, thick] (\x +0.4,\y+0.4) -- (\x +0.6,\y+0.4) -- (\x +0.6,\y+0.6);
  
  \foreach \x/\y in {5/0, 0/5}
 \draw[firstColor, thick] (\x +0.4,\y+0.4) -- (\x +0.4,\y+0.6) -- (\x +0.6,\y+0.6); 
 
 \foreach \x/\y in {1/1, 2/2, 4/4, 5/5, 7/5, 8/4, 8/2, 7/1, 5/1, 4/2, 2/4, 1/5}
 {
\filldraw[checkersColor] (\x, \y) circle (0.3cm);
\draw [black] (\x, \y) circle (0.3cm);
}

\node at (10.5,3) {$\longrightarrow$}; 

\begin{scope} [xshift = 12cm, yshift=0.5cm] 
\draw [gray] (0,0) grid (8,5);

\foreach \x/\y in {1/1, 1/3, 1/5, 2/2, 2/4, 3/1, 3/3, 3/5, 4/2, 4/4, 5/1, 5/3, 5/5, 6/2, 6/4, 7/1, 7/3, 7/5, 8/2, 8/4}
  \filldraw[lightgray] (\x-1,\y-1) rectangle (\x,\y);

\draw (0,0) rectangle (8,5);

 \foreach \x/\y in {1/1, 2/2, 4/4, 5/5, 7/5, 8/4, 8/2, 7/1, 5/1, 4/2, 2/4, 1/5}
 {
\filldraw [checkersColor] (\x-0.5, \y-0.5) circle (0.3cm);
\draw [black] (\x-0.5, \y-0.5) circle (0.3cm);
}

\end{scope}
 
\end{tikzpicture}
\end{center}
\caption{When $m$ and $n$ have a common factor, we can find a nontrivial solution to the empty pebble-puzzle on an $m-1$ by $n-1$ checkerboard.} 
\label{F:twoColoring2}
\end{figure}
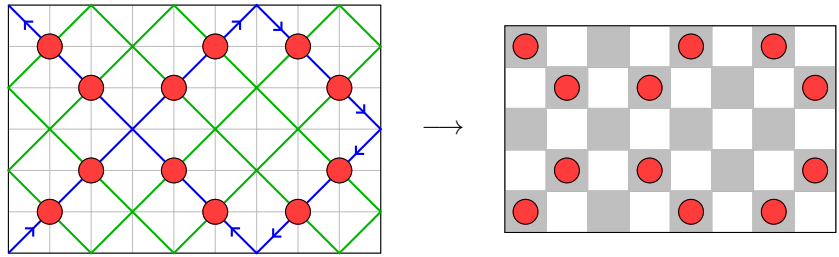

\begin{Remark*}
By famous work of Pieter Kasteleyn \cite{Kasteleyn}, the number of domino tilings of a rectangle can be computed as the determinant of a signed version of the adjacency matrix of the bipartite structure of light and dark squares. If we are only interested in the parity of the number of domino tilings, the signs are not necessary and the number will be odd or even according to whether the adjacency matrix is invertible or not modulo 2. But inverting the adjacency matrix means solving a general pebble puzzle on the given checkerboard. A corollary of Propositions~\ref{P:checkersInverse1} and \ref{P:checkersInverse2} is therefore that the number of domino tilings of an $m-1$ by $n-1$ rectangle is odd precisely when $\gcd(m,n)=1$. 

Remarkably, while I was working on this article, Yuhi Kamio, Junnosuke Koizumi and Toshihiko Nakazawa published the paper \cite{dominos} which gives a proof of quadratic reciprocity by relating the Jacobi symbol $(m|n)$ to a sum over domino tilings of an $m-1$ by $n-1$ rectangle! 
\end{Remark*}

Next we connect puzzles with pebbles in the bottom row to the Zolotarev symbol, which we still take to be defined by the signs of bounces in arithmetic billiards as in Section~\ref{S:reciprocity}.

\begin{Thm} \label{T:checkersLegendre}
Suppose that $m$ and $n$ are relatively prime, and consider the puzzle where we put pebbles on the light squares of the bottom row of an $m-1$ by $n-1$ checkerboard. Let $s$ be the number of checkers in the solution. Then 
\[ \left(\frac{m}{n}\right) = (-1)^s. \]
\end{Thm}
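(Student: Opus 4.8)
The plan is to reduce the statement, by linearity, to single-pebble puzzles and there read off the parity of the number of checkers from the sign of one bounce. Recall that with the extended definition of Section~\ref{S:reciprocity} the symbol $(m|n)$ is \emph{by definition} $(-1)^{\nu}$, where $\nu$ is the number of negative bounces along the base. So it suffices to prove that $s\equiv\nu\pmod 2$.

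First I would assemble the single-pebble building blocks. For each light square of the bottom row, sitting at a base bounce point $(2k,0)$ occurring at time $\tau$, Proposition~\ref{P:checkersInverse1} solves the single-pebble puzzle by two-colouring the billiard path — colour $A$ before the bounce at $(2k,0)$, colour $B$ after it — and placing a checker at every bichromatic crossing. Writing $c_k$ for the number of checkers in this solution, $c_k$ is exactly the number of transversal crossings between the arc $A=\gamma|_{[0,\tau]}$ (from $(0,0)$ to $(2k,0)$) and the arc $B=\gamma|_{[\tau,mn]}$ (from $(2k,0)$ to the terminal corner $E$). The full bottom-row puzzle is the mod~$2$ sum of these single-pebble puzzles, so its solution is the symmetric difference of the single-pebble solutions; since $|X_1\oplus\cdots\oplus X_r|\equiv\sum_i|X_i|\pmod2$, we get $s\equiv\sum_k c_k\pmod2$, the sum running over all base bounces (the light squares of the bottom row).

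The heart of the matter is the claim that $(-1)^{c_k}$ equals the sign of the bounce at $(2k,0)$. I would prove this by a mod~$2$ intersection/linking argument in the rectangle viewed as a disk. The arcs $A$ and $B$ meet the boundary only at their endpoints and at the intervening bounces; because $\gcd(m,n)=1$, each boundary bounce point is visited exactly once by the whole path, so $A$ and $B$ never touch the boundary at a common point, and one may push the interiors of $A$ and $B$ slightly into the open rectangle without changing the number of $A$–$B$ crossings. This yields two immersed arcs whose only boundary points are their four endpoints. For immersed arcs in a disk with four distinct boundary endpoints, the parity of the number of transversal crossings depends only on whether the two endpoints of $A$ interleave (link) with those of $B$ around the boundary — the standard fact that each arc may be homotoped rel endpoints to a straight chord without altering the crossing parity. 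Now $(0,0)$ and $E$ are corners, while the two endpoints near $(2k,0)$ arise from splitting the shared bounce, and the bounce's sign fixes their left/right order: at a positive (rightward) bounce the incoming strand arrives from the upper left and the outgoing strand departs to the upper right, and the reverse holds at a negative bounce. A short case check, according to which corner $E$ is (that is, to the parities of $m$ and $n$), shows that at a positive bounce the endpoints occur in the unlinked order $AABB$, so $c_k$ is even, while at a negative bounce they occur in the linked order $ABAB$, so $c_k$ is odd. Hence $(-1)^{c_k}$ is $+1$ for a positive and $-1$ for a negative bounce.

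Combining the pieces, $(-1)^{s}=\prod_k(-1)^{c_k}=\prod_k\operatorname{sign}(\text{bounce }k)=(m|n)$. The main obstacle is the topological claim that the crossing parity of $A$ and $B$ is governed solely by the linking of their four endpoints; its essential inputs are that the path meets each boundary bounce point exactly once (so the push-off into the interior is unambiguous) and that all self-intersections are transversal double points, both of which follow from $\gcd(m,n)=1$. The remaining ingredients — the linearity reduction and the left/right dictionary at a bounce — are routine bookkeeping.
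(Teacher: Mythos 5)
Your proposal is correct, and it shares the paper's overall architecture --- reduce by linearity to single-pebble puzzles, identify the checkers with the bichromatic crossings of the two-coloured billiard path, and show that the parity of their number matches the sign of the bounce at $(2k,0)$ --- but the way you establish that last, central step is genuinely different. The paper argues by counting along the time parametrization: the blue arc has length a multiple of $2m$, hence even; the number of crossings it passes through is its length minus its number of bounces; and the bounces pair off (top against bottom because the length is a multiple of $2m$, left against right except for one unmatched right-wall bounce exactly when the final bounce is negative), so the parity of the passage count, hence of the checker count, is read off directly. You instead invoke mod~$2$ intersection theory for immersed arcs in a disk: after pushing the two arcs off the boundary (legitimate since $\gcd(m,n)=1$ guarantees no shared boundary point other than the split bounce) the crossing parity depends only on whether the four endpoints interleave on the boundary circle, and the bounce's direction dictates the left/right order of the two feet at $(2k,0)$, giving the unlinked pattern $AABB$ for a positive bounce and the linked pattern $ABAB$ for a negative one, uniformly in the location of the terminal corner. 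Your route buys conceptual clarity --- it explains \emph{why} the sign of the bounce is the only datum that matters, with no arithmetic on bounce counts --- at the cost of importing the homotopy-invariance of crossing parity, which you should either prove (a short transversality/general-position argument) or cite, since it is the one nonelementary ingredient; the paper's count is longer in bookkeeping but entirely self-contained. Both are valid proofs of Theorem~\ref{T:checkersLegendre}.
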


\begin{proof}
We show that the number of checkers in the solution to a single-pebble puzzle with a pebble on square $2k$ of the bottom row as in Figure~\ref{F:twoColoring1} is even if the corresponding bounce at the point $(2k,0)$ in arithmetic billiards is positive, and odd if that bounce is negative.

The path in $m$ by $n$ arithmetic billiards crosses itself $(m-1)(n-1)/2$ times, and these crossings correspond to the dark squares of an $m-1$ by $n-1$ checkerboard. 

In the solution to the single-pebble puzzle, the number of checkers has the same parity as the number of times the blue part of the path goes trough a crossing: We can imagine that we follow the blue part of the path and place a checker every time we reach a crossing, and finally remove the checkers from the crossings that have two.

The length of the blue path is a multiple of $2m$, and in particular even. The number of times it passes a crossing is that length minus the number of times it bounces, including the bounce at the endpoint $(2k,0)$. To figure out the parity of that number of bounces in turn, we notice that the number of bounces against the top of the rectangle is the same as the number of bounces against the bottom. Moreover, the number of left and right bounces are equal if the ball is moving from left to right as it bounces at $(2k,0)$, while if it's moving from right to left it has bounced one more time at the right than at the left side of the rectangle. 
\end{proof}

Using Theorem~\ref{T:checkersLegendre} we can give a slightly different proof of reciprocity based on parity checkers. Let $m$ and $n$ be odd and relatively prime. On an $m-1$ by $n-1$ checkerboard, let $s$ be the number of checkers in the solution to the bottom row puzzle, and let $t$ be the number of checkers in the solution to the puzzle given by instead putting pebbles on the light squares of the leftmost column. 
From Theorem~\ref{T:checkersLegendre} we know that $(m|n) = (-1)^s$ and $(n|m) = (-1)^t$.

Now let $u$ be the number of checkers in the solution to the puzzle with pebbles both along the bottom row and along the leftmost column. By linearity we can obtain the solution to that puzzle by superposing the solutions to the two former puzzles. Therefore $u\equiv s+t$ (mod 2), and consequently
\[ \left(\frac{m}{n}\right)\left(\frac{n}{m}\right) = (-1)^{s+t} = (-1)^u.\]
On the other hand a simple pattern emerges, showing that
\[ u = \frac{(m-1)(n-1)}4.\]

\begin{figure} [ht]
\begin{center}
\begin{tikzpicture} [scale=0.53]

\draw [gray] (0,0) grid (10,6);

\foreach \x in {0, 2, 4, 6, 8}
  \foreach \y in {0, 2, 4}
  \filldraw[lightgray] (\x,\y) rectangle (\x+1,\y+1);
  
  \foreach \x in {1, 3, 5, 7, 9}
  \foreach \y in {1, 3, 5}
  \filldraw[lightgray] (\x,\y) rectangle (\x+1,\y+1);
 
 \draw (0,0) rectangle (10,6); 
 
 \foreach \x/\y in {1/1, 1/3, 1/5, 2/6, 3/3, 4/2, 4/4, 4/6, 5/3, 6/6, 7/1, 7/3, 7/5, 9/3,
 9/5, 10/2, 10/6}
 {
\filldraw [checkersColor] (\x-0.5, \y-0.5) circle (0.3cm);
\draw [black] (\x-0.5, \y-0.5) circle (0.3cm);
}

\foreach \x in {1,3,5,7,9}
{
\filldraw [gray] (\x+0.52, 0.48) circle (0.15cm);
\filldraw [black!80!white] (\x+0.5, 0.5) circle (0.15cm);
}

\node at (11.25, 3) {$+$}; 

\begin{scope} [xshift = 12.5cm] 

\draw [gray] (0,0) grid (10,6);

\foreach \x in {0, 2, 4, 6, 8}
  \foreach \y in {0, 2, 4}
  \filldraw[lightgray] (\x,\y) rectangle (\x+1,\y+1);
  
  \foreach \x in {1, 3, 5, 7, 9}
  \foreach \y in {1, 3, 5}
  \filldraw[lightgray] (\x,\y) rectangle (\x+1,\y+1);
 
 \draw (0,0) rectangle (10,6); 

\foreach \y in {1,3,5}
{
\filldraw [gray] (0.52, \y+0.48) circle (0.15cm);
\filldraw [black!80!white] (0.5, \y+0.5) circle (0.15cm);
}

 \foreach \x/\y in {1/1, 1/3, 1/5, 2/2, 2/4, 3/3, 5/3, 6/2, 6/4, 7/1, 7/3, 7/5, 8/2,
 8/4, 8/6, 9/3, 9/5, 10/4}
 {
\filldraw [checkersColor] (\x-0.5, \y-0.5) circle (0.3cm);
\draw [black] (\x-0.5, \y-0.5) circle (0.3cm);
}

\end{scope}

\begin{scope} [xshift = 6.25cm, yshift=-8cm] 

\node at (-1.5, 3) {$=$};

\draw [gray] (0,0) grid (10,6);

\foreach \x in {0, 2, 4, 6, 8}
  \foreach \y in {0, 2, 4}
  \filldraw[lightgray] (\x,\y) rectangle (\x+1,\y+1);
  
  \foreach \x in {1, 3, 5, 7, 9}
  \foreach \y in {1, 3, 5}
  \filldraw[lightgray] (\x,\y) rectangle (\x+1,\y+1);
 
 \draw (0,0) rectangle (10,6); 

\foreach \y in {1,3,5}
{
\filldraw [gray] (0.52, \y+0.48) circle (0.15cm);
\filldraw [black!80!white] (0.5, \y+0.5) circle (0.15cm);
}

\foreach \x in {1,3,5,7,9}
{
\filldraw [gray] (\x+0.52, 0.48) circle (0.15cm);
\filldraw [black!80!white] (\x+0.5, 0.5) circle (0.15cm);
}

 \foreach \x/\y in {2/2, 2/4, 2/6, 4/2, 4/4, 4/6, 6/2, 6/4, 6/6, 8/2, 8/4, 8/6,
 10/2, 10/4, 10/6}
 {
\filldraw [checkersColor] (\x-0.5, \y-0.5) circle (0.3cm);
\draw [black] (\x-0.5, \y-0.5) circle (0.3cm);
}

\end{scope}

\end{tikzpicture}

\label{F:finalFigure}
\end{center}

\end{figure}

\end{document}